\newtheorem{theorem}{Theorem}
\newcommand{\EE}{\mathbb{E}}
\newcommand{\PP}{\mathbb{P}}
\newcommand{\RR}{\mathbb{R}}
\newcommand{\ve}{\varepsilon}
\newcommand\reallywidehat[1]{%
\savestack{\tmpbox}{\stretchto{%
  \scaleto{%
    \scalerel*[\widthof{\ensuremath{#1}}]{\kern-.6pt\bigwedge\kern-.6pt}%
    {\rule[-\textheight/2]{1ex}{\textheight}}
  }{\textheight}%
}{0.5ex}}%
\stackon[1pt]{#1}{\tmpbox}%
}
\begin{document}

\title{Polynomial approximations in a generalized Nyman-Beurling criterion}
\author{F. Alouges -- S. Darses -- E. Hillion}
\date{}


\address{Ecole polytechnique et cnrs, Institut Polytechnique de Paris, CMAP, Palaiseau, France} 
\email{francois.alouges@polytechnique.edu}

\address{Aix-Marseille Universit\'e, cnrs, Centrale Marseille, I2M, Marseille, France} 
\email{sebastien.darses@univ-amu.fr}

\address{Aix-Marseille Universit\'e, cnrs, Centrale Marseille, I2M, Marseille, France}
\email{erwan.hillion@univ-amu.fr}

\maketitle

\begin{abstract}

The Nyman-Beurling criterion, equivalent to the Riemann hypothesis (RH), is an approximation problem in the space of square integrable functions on $(0,\infty)$, involving dilations of the fractional part function by factors $\theta_k\in(0,1)$, $k\ge1$. Randomizing the $\theta_k$ generates new structures and criteria. One of them is a sufficient condition for RH that splits into (i) showing that the indicator function can be approximated by convolution with the fractional part, (ii) a control on the coefficients of the approximation. This self-contained paper generalizes conditions (i) and (ii) that involve a $\sg_0\in(1/2,1)$, and imply $\zeta(\sg+it)\neq 0$ in the strip $1/2<\sg\le\sg_0<1$. We then identify functions for which (i) holds unconditionally, by means of polynomial approximations. This yields in passing a short probabilistic proof of a known consequence of Wiener's Tauberian theorem. In this context, the difficulty for proving RH is then reallocated in (ii), which heavily relies on the corresponding Gram matrices, for which two remarkable structures are obtained. We show that 
a particular tuning
of the approximating sequence leads to a striking simplification of the second Gram matrix, then reading as a block Hankel form.
\end{abstract}

\begin{abstract}

Le crit\`ere de  Nyman-Beurling, \'equivalent \`a l'hypoth\`ese de Riemann (HR), est un probl\`eme d'approximation dans l'espace des fonctions de carr\'e int\'egrable sur $(0,\infty)$, par des dilatations de facteurs $\theta_k\in(0,1)$, $k\ge1$, de la fonction partie fractionnaire. Prendre les $\theta_k$ al\'etoires g\'en\`ere de nouvelles structures et de nouveaux crit\`eres.  L'un d'eux est une condition suffisante pour HR qui revient \`a (i) montrer que la fonction indicatrice peut \^etre approxim\'ee par des convolutions de la partie fractionnaire, et (ii) avoir un contr\^ole des coefficients de l'approximation. Ce papier g\'en\'eralise les conditions (i) et (ii) afin d'obtenir un crit\`ere impliquant $\zeta(\sg+it)\neq 0$ dans une bande  $1/2<\sg\le\sg_0<1$. On identifie ensuite des fonctions pour lesquelles (i) est v\'erifi\'ee inconditionnellement, gr\^ace \`a des approximations polynomiales. Cela fournit, au passage, une courte preuve probabiliste d'une cons\'equence connue d'un th\'eor\`eme Taub\'erien. Dans ce contexte, la difficult\'e à prouver HR se reporte sur (ii) qui pourrait n\'ecessiter une \'etude fine des matrices de Gram correspondantes. Nous obtenons deux structures remarquables de ces matrices. Nous montrons qu'un choix particulier des suites approximantes fournit une simplification frappante de la matrice de Gram qui s'\'ecrit alors sous la forme de matrices de Hankel par blocs.
\end{abstract}

\footnote{\textit{Keywords:}  Polynomial approximation; Riemann Zeta function; Nyman-Beurling criterion;  Probability.}

\section{Introduction}

\subsection{Context and main definitions}

The Riemann hypothesis (RH) is equivalent to the Nyman-Beurling (NB) criterion, which is an approximation problem of the indicator function $\chi$ of $(0,1)$ in the space of square integrable functions on $(0,\infty)$, involving dilations of the fractional part function $\{\cdot\}$ by factors $\theta_k\in(0,1]$:
\begin{theorem}[\cite{BDBLS00}] \label{th:NBcriterion}
RH holds if and only if,
given $\ve>0$, there exist $n \ge 1$, coefficients $c_1,\ldots,c_n \in \RR$, and $\theta_1,\ldots,\theta_n \in (0,1]$ such that 
\begin{equation} 
\int_0^\infty \left( \chi(t) - \sum_{k=1}^n c_k \left\{ \frac{\theta_k}{t} \right\}\right)^2 dt < \ve.
\label{eq:NBexplicit}
\end{equation}
\end{theorem}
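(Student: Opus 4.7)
The plan is to transfer the approximation problem onto the critical line via a change of variable and the Mellin-Plancherel isometry, so that the Riemann zeta function appears explicitly.

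First, the substitution $t \mapsto 1/t$ realizes an isometry $L^2((0,\infty), dt) \to L^2((0,\infty), dt/t^2)$ sending $\chi$ to $\chi_{(1,\infty)}$ and $t \mapsto \{\theta/t\}$ to $t \mapsto \{\theta t\}$, so the criterion becomes the approximation of $\chi_{(1,\infty)}$ by finite sums $\sum_k c_k \{\theta_k \cdot\}$ (with $\theta_k \in (0,1]$) in $L^2((0,\infty), dt/t^2)$. I then apply Mellin-Plancherel at abscissa $\sigma = -1/2$. Using the classical identity $\int_1^\infty \{t\} t^{-s-1}\,dt = \frac{1}{s-1} - \frac{\zeta(s)}{s}$, one obtains, on the strip $-1 < \Re s < 0$,
\[
\MM(\chi_{(1,\infty)})(s) = -\frac{1}{s}, \qquad \MM(\{\theta\,\cdot\})(s) = \theta^{-s}\,\frac{\zeta(-s)}{s}.
\]
Setting $w = -s$ puts the problem on the critical line $\Re w = 1/2$: the squared $L^2(dt/t^2)$-distance equals, up to a universal constant,
\[
\int_\RR \frac{|1 + \zeta(w) D(w)|^2}{|w|^2}\,dt, \qquad w = \tfrac{1}{2}+it,
\]
where $D(w) = \sum_{k=1}^n c_k\, \theta_k^{\,w}$ is a generalized Dirichlet polynomial with $\theta_k \in (0,1]$. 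Theorem~\ref{th:NBcriterion} thus reduces to the density of the products $\zeta \cdot D$ around $-1$ in this weighted $L^2$ norm on the critical line.

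For the \emph{sufficiency} direction (assuming RH), I would exhibit explicit approximants $D_N$ built from Möbius-weighted Dirichlet truncations $D_N(w) = \sum_{n \le N} a_n n^{-w}$ (corresponding to $\theta_k = 1/k \in (0,1]$), and verify that $\zeta D_N \to -1$ in the weighted $L^2$ norm by leveraging zero-free estimates for $1/\zeta$ strictly right of the critical line. For the \emph{necessity}, the restriction $\theta_k \in (0,1]$ is essential: it forces $\theta_k^{\,w}$ to be uniformly bounded and analytic in $\Re w > 0$, confining the closed span of the transforms to a Hardy-type subspace of the right half-plane. A zero $\rho$ of $\zeta$ with $\Re \rho > 1/2$ then produces, via a reproducing-kernel evaluation at $\rho$, a nonzero continuous linear functional that vanishes on every $\zeta(w) D(w)/w$ yet not on $-1/w$, contradicting approximability.

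The main obstacle is the necessity direction: extracting from a putative off-critical zero of $\zeta$ an explicit orthogonal obstruction requires careful Hardy-space analysis on $\Re w > 1/2$, including the treatment of the pole of $\zeta$ at $w=1$ and of the boundary behaviour on the critical line.
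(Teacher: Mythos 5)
First, a point of comparison: the paper does not prove Theorem~\ref{th:NBcriterion} at all --- it is quoted from \cite{BDBLS00} --- so there is no internal proof to measure you against. Your reduction to the critical line is correct and is the standard frame: the substitution $t\mapsto 1/t$, Mellin--Plancherel at $\Re s=-1/2$, and the identity $\int_0^\infty\{\theta/t\}t^{s-1}\,dt=-\theta^{s}\zeta(s)/s$ do convert the squared distance into $\frac{1}{2\pi}\int_{\RR}|1+\zeta(w)D(w)|^{2}|w|^{-2}\,dt$ with $w=\frac12+it$ and $D(w)=\sum_k c_k\theta_k^{w}$. The genuine gap is in your plan for ``RH $\Rightarrow$ approximation''. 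It is not known, even under RH, that M\"obius-truncated Dirichlet polynomials $D_N(w)=\sum_{n\le N}a_nn^{-w}$ (with $a_n=\mu(n)$ or smoothed variants) satisfy $\zeta D_N\to-1$ in this weighted $L^2$ norm: RH gives $\sum_{n\le N}\mu(n)n^{-w}\to 1/\zeta(w)$ only for $\Re w>\frac12$, with no norm control on the line itself, where $1/\zeta$ is unbounded; the convergence and rate of these ``natural approximations'' is precisely the open problem studied in \cite{BDBLS05}. Every known proof of this implication is non-constructive: Beurling's duality argument \cite{Beu55} (a nonzero annihilator of the dilations produces, via Hardy-space theory, an analytic obstruction that forces an off-line zero of $\zeta$), its $L^2(0,\infty)$ adaptation in \cite{BDBLS00}, and, for $\theta_k=1/k$, B\'aez-Duarte's weak-compactness argument \cite{BD03}, which only yields convex combinations of the natural approximants. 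You would need to replace your explicit construction by such a duality argument.

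You have also placed the difficulty on the wrong side: ``approximation $\Rightarrow$ RH'' is the elementary direction and needs no reproducing kernels or Hardy spaces. Set $f_n=\chi-\sum_kc_k\{\theta_k/\cdot\}$ and suppose $\|f_n\|_2\to0$. Its Mellin transform is $\frac1s+\sum_kc_k\theta_k^{s}\frac{\zeta(s)}{s}$ on $0<\Re s<1$, hence equals $1/\rho\neq0$ at any zero $\rho$ with $\Re\rho=\sigma\in(\frac12,1)$. On the other hand, since $\theta_k\le1$ gives $\{\theta_k/t\}=\theta_k/t$ for $t\ge1$, one has $f_n(t)=-t^{-1}\sum_kc_k\theta_k$ there, whence $\bigl|\sum_kc_k\theta_k\bigr|=\|f_n\|_{L^2(1,\infty)}$ and
\begin{equation*}
\Bigl|\widehat{f_n}(\rho)\Bigr|\;\le\;\frac{\|f_n\|_{L^2(0,1)}}{\sqrt{2\sigma-1}}+\frac{1}{|1-\rho|}\Bigl|\sum_kc_k\theta_k\Bigr|\;\ll_\rho\;\|f_n\|_2\;\xrightarrow[n\to\infty]{}\;0,
\end{equation*}
a contradiction. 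This two-line estimate is where the hypothesis $\theta_k\in(0,1]$ is actually used, and it sidesteps entirely the pole of $\zeta$ at $w=1$ and the boundary-behaviour issues you flag as the main obstacle.
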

B\'aez-Duarte \cite{BD03} showed that it is possible to specify $\theta_k = \frac1k$ in this criterion. The coefficients $(c_k)_{1\leq k\leq n}$ leading to the best approximation, then solve the linear system
$$
Gx=b,
$$
where $x=(c_1,\cdots,c_n)^T,\, b=(b_1,\cdots,b_n)^T$, with $\d b_k =\int_0^1 \left\{ \frac{1}{kt}\right\}dt$ and $G$ is the Gram matrix
$$
G_{k,l} = \int_0^{\infty} \left\{ \frac{1}{kt}\right\}\left\{ \frac{1}{lt}\right\} \ dt,\,\,\mbox{ for }1\le k,l\le n\,.
$$
The computation of $G$ is possible through a formula due to Vasyunin \cite{Vas95}. The evaluation of the distance in (\ref{eq:NBexplicit}) may be then approximately evaluated by numerical means \cite{BDBLS00}.

Over recent years, some research works have been devoted to the Nyman-Beurling criterion for the Riemann Hypothesis. For example, the articles \cite{BC13} by Bettin and Conrey, or \cite{MR15}, \cite{MR16} by Maier and Rassias aim to study cotangent sums related to this problem and present a series of relevant results.



More recently, randomizing the $\theta_k$ has produced new characterizations and structures \cite{DH21}. Among these, the following sufficient condition for RH is obtained.
From now on, we write $\pE Z$ for the expectation of a random variable (r.v.) $Z$.

\begin{theorem} \label{th:gNBtoRH}
Let $(Z_{k,n})_{1\le k\le n, n \ge 1}$ be positive square integrable r.v. satisfying, for any $\alpha >1$,
\bea \label{moment}
\sum_{k=1}^n \left(\pE Z_{k,n}^2\right)^{\alpha/2} \ll_\alpha 1.
\eea
If there exist coefficients $(c_{k,n})_{1 \le k \le n, n \ge 1}$ such that, for any $M_n\to\infty$, 
\bea\label{Dn2}
D_n^2 = \int_0^\infty \left|\chi(t)-\sum_{k=1}^n c_{k,n}\pE\left\{\frac{Z_{k,n}}{t}\right\} \right|^2 dt & \xrightarrow[n \rightarrow \infty]{} & 0 \\
\label{eq:cknBound}
\sum_{k=1}^n |c_{k,n}|^2\PP(Z_{k,n} \ge M_n) & \xrightarrow[n\to\infty]{} & 0,
\eea
then RH holds.
\end{theorem}
The relevance of the former theorem stems from the fact that it is possible to show that the converse holds for some specific structures, as dilated or concentrated r.v., see \cite{DH21} for more details. A particular case of dilated structure is given by exponentially distributed r.v. $Z_k\sim\cal E(k)$, which is related to a particular cotangent sum, studied in~\cite{DH20}.

Notice that, for r.v. $Z_{k}$ with densities $\phi_{k}$, the involved functions
\begin{equation}
    \pE\left\{\frac{Z_{k}}{t}\right\}  =  \int_0^\infty \left\{\frac{x}{t}\right\}\phi_{k}(x)dx,
\end{equation}
can be written as a multiplicative convolution. Indeed, for $g:\R_+\to\R$, let $g^\times(t)$ denote the multiplicative convolution of $g$ with the fractional part:
\begin{equation}
g^\times(t) = \int_0^\infty \left\{\frac{x}{t}\right\}g(x)\frac{dx}{x}=\left(\left\{\frac{1}{\cdot}\right\}*g\right)(t).
\end{equation}
If $g_k(x)=x\phi_k(x)$ then
$$
g^\times_k(t)=\pE\left\{\frac{Z_{k}}{t}\right\}.
$$
We then observe that this latter expression allows for a generalization with functions $g_k$ that possibly change sign. 

From now on, we use two different notations: $g^\times_k$ when dealing with functions $g_k$, and $h^\times_k$ when dealing with r.v. $Z_k$:
\begin{eqnarray}
g_k^\times(t) & := & \int_0^\infty \left\{\frac{x}{t}\right\}g_k(x)\frac{dx}{x} \\
h^\times_k(t) & := & \pE\left\{\frac{Z_{k}}{t}\right\}.
\end{eqnarray}

\begin{defi}
We say that a sequence of real function $(g_k)_k$, resp. a sequence of r.v. $(Z_k)_k$, verifies {\emph gNB} if there exist coefficients $(c_{k,n})_{1 \le k \le n, n \ge 1}$ such that 
\bea
\d D_n^2 = \int_0^\infty \left|\chi(t)-\sum_{k=1}^n c_{k,n} g^\times_{k}(t) \right|^2 dt \xrightarrow[n \rightarrow \infty]{} 0, \qquad ({\rm gNB})
\eea
resp. if 
\bea
\d D_n^2 = \int_0^\infty \left|\chi(t)-\sum_{k=1}^n c_{k,n} h^\times_{k}(t) \right|^2 dt \xrightarrow[n \rightarrow \infty]{} 0. \qquad ({\rm gNB})
\eea
\end{defi}
The main purpose of this paper is to identify classes of functions $g_k$, or corresponding r.v. $Z_k$, that verify gNB, unconditionally (i.e. without assuming RH):
\begin{enumerate}
    \item In Section \ref{ex1}, we treat the case of r.v. $\d Z_k = Y/X_k$ where $Y$ is a positive r.v. and $X_k$ are $\Gamma(k,1)$-distributed r.v. independent of $Y$. 
    \item In Section \ref{ex2}, we consider functions $g_k$, that are not necessarily non negative, defined by induction as, for all $k\ge0$:
\bean
g_{k+1}(x) = - x g'_k(x) - r_k g_k(x), \quad x\ge0,
\eean
where $(r_k)_{k\ge0}$ is a real sequence and $g_0$ is a suitable initialization.
\end{enumerate}
In each case, we  also provide expressions of the scalar products $\langle h^\times_{k}, h^\times_{j} \rangle$, and $\langle g^\times_{k}, g^\times_{j} \rangle$, which may be necessary to tackle Condition (\ref{eq:cknBound}) in the future. Two remarkable structures are obtained for the Gram matrices. Moreover, taking $r_k=1/2$ in (2) yields to a striking simplification.

To handle Case (2), we need to generalize Theorem \ref{th:gNBtoRH}, which is done in Section \ref{basic}.


\subsection{Preliminaries}

We say that $g:\R^+\to\R$ verifies Assumption (M) if $$\int_0^\infty|g(x)|dx<\infty \quad {\rm and}\quad \int_0^\infty|g(x)|\frac{dx}{x}<\infty .\qquad (\rm M)$$
We write a complex number $s=\sg+it$, $\sg,t\in\R$. The previous assumption allows to define the Mellin transform $\widehat{g}$ of $g$ in the critical strip $0<\sg<1$:
\bean
\widehat{g}(s) & = & \int_0^\infty g(x)x^{s-1}dx,
\eean
since $\d \int_0^\infty |g(x)|x^{\sg-1}dx\le 
\left(\int_0^\infty|g(x)|dx\right)^{\sg}\left(\int_0^\infty|g(x)|\frac{dx}{x}\right)^{1-\sg}$, due to the H\"older inequality. Assumption (M) is sufficient but not necessary for the Mellin transform to be defined, and we recall that Mellin-Plancherel theory allows to define $\widehat{g}(s)$ whenever $g\in L^2(0,\infty)$. In this case, we have the isometry:
\bean
\int_0^\infty |g(x)|^2dx & =  & \frac{1}{2\pi}\int_{-\infty}^{\infty} \left|\widehat{g}\left(\frac{1}{2}+it\right)\right|^2dt.
\eean
We finally recall the fundamental identity on which relies the NB criterion, see \cite[(2.1.5)]{Tit86}:
\bea \label{fonda}
\int_0^\infty \left\{\frac{1}{x}\right\}x^{s-1}dx & = & -\frac{\zeta(s)}{s}, \quad 0<\sg<1.
\eea

In the case where $\phi(x) = g(x)/x$ is the density of a r.v. $Z\ge0$, Assumption (M) simply translates into $\pE Z<\infty$. 
The following lemma is standard, but we give a proof in our framework for the sake of completeness.

\begin{lemm}\label{M2}
{\rm (i)} Let $g:\R^+\to\R$ satisfying (M).
Then $g^\times\in L^2(\R_+)$ and its Mellin transform $\widehat{g^\times}(s)$ is well defined for $\sg \in (0,1)$. Moreover, 
\bean
\widehat{g^\times}(s) & = & -\frac{\zeta(s)}{s}\widehat{g}(s),\quad 0<\sg<1\,.
\eean
{\rm (ii)} If $Z\ge0$ is an integrable r.v. then $t\longmapsto h^\times(t): =\pE\left\{\frac{Z}{t}\right\}$ belongs to $L^2(\R_+)$ and 
\bean
\widehat{h^\times}(s) & = & -\frac{\zeta(s)}{s} \pE Z^s,\quad 0<\sg<1\,.
\eean
\end{lemm}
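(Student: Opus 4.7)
The plan is to exploit the key fact that multiplicative convolution becomes ordinary multiplication under the Mellin transform, combined with the identity $\int_0^\infty \{1/x\}\, x^{s-1}\, dx = -\zeta(s)/s$ for $0 < \sigma < 1$ recalled just above.

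First I would establish two pointwise bounds on $g^\times$. From $\{y\} \le 1$ one obtains the uniform bound $|g^\times(t)| \le A$, where $A := \int_0^\infty |g(x)|\,dx/x$. For the decay at infinity, split the integral defining $g^\times(t)$ at $x = t$: on $(0,t)$ one has $\{x/t\} = x/t$, contributing $\frac{1}{t}\int_0^t g(x)\,dx$, bounded by $B/t$ with $B := \int_0^\infty |g(x)|\,dx$; on $(t,\infty)$ the bounds $\{x/t\} \le 1$ and $1/x \le 1/t$ give a contribution bounded by $B/t$ as well. Together, $|g^\times(t)| \le 2B/t$ for every $t > 0$. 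Combining the two estimates by splitting the $t$-integral at any $T > 0$ yields
\begin{equation*}
\int_0^\infty |g^\times(t)|^2\,dt \le T A^2 + 4 B^2/T < \infty,
\end{equation*}
so $g^\times \in L^2(\RR_+)$. The same splitting, applied to $\int_0^\infty |g^\times(t)|\,t^{\sigma-1}\,dt$, shows that $\widehat{g^\times}(s)$ is defined as an absolute integral for every $\sigma \in (0,1)$.

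Next, I would compute $\widehat{g^\times}(s)$ via Fubini. The double integral $\int_0^\infty \int_0^\infty \{x/t\}\,|g(x)|\,t^{\sigma-1}\,\frac{dx}{x}\,dt$ is finite: integrating in $t$ first via the substitution $u = x/t$ gives an inner integral $x^\sigma \int_0^\infty \{u\}\, u^{-\sigma-1}\,du$, the latter constant being finite for $\sigma \in (0,1)$, and the resulting outer integral reduces to $\int_0^\infty |g(x)|\, x^{\sigma-1}\,dx$, itself finite by the H\"older bound recorded in the preliminaries. After swapping, the $t$-integral equals $-\frac{\zeta(s)}{s}\,x^s$ (by the same substitution and the quoted identity, used after $v = 1/u$ to rewrite $\int\{u\} u^{-s-1} du = -\zeta(s)/s$), whence $\widehat{g^\times}(s) = -\frac{\zeta(s)}{s}\,\widehat{g}(s)$.

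For the random variable part, the entire argument for the $g$-case goes through with $g(x)\,dx/x$ replaced by the law $d\mu_Z(x)$; the role of the two supporting bounds is played by $\EE\{Z/t\} \le 1$ and $\EE\{Z/t\} \le \EE Z/t$, which together give $|h^\times(t)| \le \min(1, \EE Z/t)$ and hence $h^\times \in L^2(\RR_+)$, while the Fubini step needs only $\EE Z^\sigma < \infty$ for $\sigma \in (0,1)$, following from $\EE Z < \infty$ via $Z^\sigma \le 1 + Z$. I expect no genuine obstacle: the argument is essentially bookkeeping, and the only care required is to track these bounds sharply enough to legitimize each exchange of integrals and the single substitution at the end.
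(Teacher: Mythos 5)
Your proof is correct and follows the same overall route as the paper's: establish $L^2$ membership, verify absolute convergence of the double integral defining $\widehat{g^\times}$, and then apply Fubini together with the identity $\int_0^\infty \left\{1/x\right\} x^{s-1}\,dx = -\zeta(s)/s$. The one place where you genuinely diverge is the $L^2$ estimate. The paper writes $|g(x)|=|g(x)|^{1/2}|g(x)|^{1/2}$ and applies Cauchy--Schwarz inside the square, obtaining in one line $\|g^\times\|_{L^2}^2 \le \left(\int_0^\infty |g(x)|\,\frac{dx}{x}\right)\left(\int_0^\infty |g(x)|\,dx\right)\int_0^\infty \left\{1/t\right\}^2 dt$; you instead derive the two pointwise bounds $|g^\times(t)|\le A:=\int_0^\infty|g(x)|\,\frac{dx}{x}$ and $|g^\times(t)|\le 2B/t$ with $B:=\int_0^\infty|g(x)|\,dx$, and integrate the envelope $\min(A,2B/t)$. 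Both arguments use exactly Assumption (M); yours has the small advantage that the same pointwise envelope immediately gives absolute convergence of $\int_0^\infty |g^\times(t)|\,t^{\sigma-1}\,dt$ for all $0<\sigma<1$, while the Cauchy--Schwarz route is shorter. Your Fubini verification (integrating in $t$ first and reducing to $\int_0^\infty|g(x)|x^{\sigma-1}dx<\infty$) is the same computation as the paper's, written with the reciprocal substitution. For the random-variable part the paper is again slightly slicker, using $\EE\int_0^\infty\left\{Z/t\right\}^2dt=\EE Z\int_0^\infty\left\{1/t\right\}^2dt$ directly, but your bound $|h^\times(t)|\le\min(1,\EE Z/t)$ is equally valid, and your remark that only $\EE Z^\sigma<\infty$ is needed for the exchange of integrals is consistent with Remark~\ref{rem1} of the paper.
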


\begin{proof}
(i) Writing $|g(x)|=|g(x)|^{1/2}|g(x)|^{1/2}$ and using the Cauchy-Schwarz inequality, one obtains:
\bean
\int_0^\infty |g^\times(t)|^2dt & \le & \int_0^\infty \left(\int_0^\infty\left\{\frac{x}{t}\right\}|g(x)|\frac{dx}{x}\right)^2dt \\    
& \le & \int_0^\infty \left(\int_0^\infty|g(x)|\frac{dx}{x}\right)\left(\int_0^\infty\left\{\frac{x}{t}\right\}^2|g(x)|\frac{dx}{x}\right) dt\\
    &  = & \int_0^\infty|g(x)|\frac{dx}{x} \int_0^\infty|g(x)| dx
\int_0^\infty\left\{\frac{1}{t}\right\}^2 dt <\infty,
\eean
due to Assumption (M). We already noticed that 
for all $\sg\in(0,1)$, $\d \int_0^\infty|g(x)|x^{\sg-1}dx <+\infty$, so 
\begin{equation*}
    \int_0^\infty \int_0^\infty \left|\left\{\frac{x}{t}\right\}\frac{g(x)}{x} t^{s-1}\right|dxdt \le \int_0^\infty  \left\{\frac{1}{t}\right\}t^{\sg-1}dt \int_0^\infty|g(x)|x^{\sg-1}dx <+\infty\,.
\end{equation*}
Hence $\widehat{g^\times}$ is well defined and 
we can apply Fubini's theorem, which justifies:
$$
\widehat{g^\times}(s) = \reallywidehat{\displaystyle\left(\left\{\frac{1}{\cdot}\right\}*g\right)}(s) = \widehat{\left\{\frac{1}{\cdot}\right\}}(s) \widehat{g}(s) = -\frac{\zeta(s)}{s}\widehat{g}(s).
$$
(ii) Now, let $Z\ge0$ be a integrable r.v. and set $h^\times(t) =\pE\left\{\frac{Z}{t}\right\}$. The situation reads even simpler:
\bean
\int_0^\infty |h^\times(t)|^2dt \le \pE \int_0^\infty \left\{\frac{Z}{t}\right\}^2 dt = \pE Z \ \int_0^\infty \left\{\frac{1}{t}\right\}^2 dt <\infty\,.
\eean
Again, $\widehat{h^\times}$ is well defined by Fubini, and the result simply follows from a change of variable:
\bean 
\widehat{h^\times}(s) = \int_0^\infty \pE\left\{\frac{Z}{t}\right\} t^{s-1} dt = \pE \int_0^\infty \left\{\frac{Z}{t}\right\} t^{s-1} dt = \pE \left[ Z^s \int_0^\infty \left\{\frac{1}{u}\right\} u^{s-1} du \right] = -\frac{\zeta(s)}{s} \pE Z^s,
\eean 
without even assuming that $Z$ has a density.
\end{proof}

\begin{rem}\label{rem1}
Notice that weaker assumptions, such as $\pE Z^\sg<\infty$, $0<\sg<1$, are sufficient to show $h^\times\in L^2$, following the first lines of the proof with a different use of the Cauchy-Schwarz inequality. 
\end{rem}

Since the methodology developed hereafter mainly uses the density of polynomials in a suitable weighted $L^2$ space, we recall the classical following result:
\begin{lemm}[\cite{Nik12}, (c) p.76] \label{densite}
Let $\nu$ be a positive measure on $\R$ with $\d \int_\R e^{a|t|}d\nu(t)<\infty$ for some $a>0$. Then the polynomials are dense into $L^2(\R,\nu)$.
\end{lemm}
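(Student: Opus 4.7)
The plan is to prove density via the standard Hilbert-space duality criterion: polynomials are dense in $L^2(\R,\nu)$ if and only if their orthogonal complement is trivial, so it suffices to show that whenever $f \in L^2(\R,\nu)$ satisfies $\int_\R t^n f(t)\, d\nu(t) = 0$ for every integer $n \ge 0$, one has $f = 0$ $\nu$-a.e. The exponential moment assumption is exactly what is needed to convert this sequence of scalar moment identities into the vanishing of a holomorphic function.

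Concretely, I would introduce the Fourier--Laplace transform
\[
F(z) = \int_\R e^{zt}\, f(t)\, d\nu(t).
\]
Cauchy--Schwarz combined with $\int e^{at}d\nu<\infty$ shows that $F$ is well defined on a complex strip $\{|\mathrm{Re}(z)|<\delta\}$ for some small $\delta>0$, and a Morera-plus-Fubini (or dominated-convergence) argument then shows that $F$ is holomorphic there. Differentiating under the integral, justified by the same uniform exponential domination, yields $F^{(n)}(0) = \int t^n f\, d\nu = 0$ for every $n \ge 0$, so the full Taylor expansion of $F$ at $0$ vanishes and therefore $F \equiv 0$ on the strip. In particular $F$ vanishes on a nondegenerate segment of the imaginary axis.

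But $\xi \mapsto F(i\xi)$ is precisely the Fourier transform of the finite signed measure $f\, d\nu$; being the restriction of a function holomorphic in a complex strip, it is real-analytic on $\R$, so its vanishing on an open interval forces it to vanish identically on $\R$. Fourier uniqueness then gives $f\, d\nu = 0$, hence $f = 0$ $\nu$-a.e., which closes the orthogonality argument. The main technical obstacle is setting up the analyticity of $F$ cleanly: one needs integrable dominations for $e^{zt} f(t)$ valid uniformly on a complex neighborhood of $0$, and the one-sided exponential bound only controls the tail $t\to +\infty$ — a separate argument using the polynomial integrability of $\nu$ on $(-\infty,0]$ (implicit in the statement, since otherwise polynomials would not even lie in $L^2(\R,\nu)$) is required to dominate the negative tail. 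Everything downstream of the holomorphy of $F$ then reduces to the identity theorem and Fourier uniqueness and is routine.
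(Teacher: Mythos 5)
The paper does not actually prove this lemma --- it is quoted from Nikolski --- so the only question is whether your argument stands on its own. Your core strategy (show that any $f\in L^2(\mathbb{R},\nu)$ orthogonal to all monomials has vanishing bilateral Laplace transform $F(z)=\int e^{zt}f\,d\nu$ on a strip, then conclude $f\,d\nu=0$ by Fourier uniqueness) is exactly the standard proof of this statement, and it works verbatim under the two-sided hypothesis $\int_{\mathbb{R}}e^{a|t|}\,d\nu(t)<\infty$: Cauchy--Schwarz gives $\int e^{a|t|/2}|f|\,d\nu<\infty$, hence $F$ is holomorphic on $|\mathrm{Re}\,z|<a/2$, its derivatives at $0$ are the vanishing moments, and the identity theorem plus injectivity of the Fourier transform of the finite signed measure $f\,d\nu$ finish the job.

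The gap is in your last paragraph. You correctly notice that the hypothesis as printed, $\int e^{at}\,d\nu<\infty$ for some $a>0$, only controls the tail at $+\infty$, but your proposed repair --- dominating the negative tail by the polynomial integrability of $\nu$ on $(-\infty,0]$ --- cannot work. With only polynomial moments on the negative side, $F$ is holomorphic only on the half-strip $0<\mathrm{Re}\,z<a/2$, the point $0$ sits on its boundary, and the vanishing of all $F^{(n)}(0)$ (now merely an asymptotic expansion at a boundary point) does not force $F\equiv 0$; this is precisely the phenomenon behind indeterminate moment problems. Indeed the one-sided statement is false: take $d\nu(t)=e^{-|t|^{1/3}}\,dt$ on $(-\infty,0]$. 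It satisfies $\int e^{at}\,d\nu<\infty$ for every $a>0$ and has all polynomial moments, yet the reflected measure $e^{-t^{1/3}}\,dt$ on $[0,\infty)$ is a classical indeterminate, non--N-extremal measure, so the polynomials are not dense in $L^2(\nu)$. The lemma must therefore be read with the two-sided condition $\int e^{a|t|}\,d\nu<\infty$, which is what Nikolski states and what the paper actually verifies in its application (there $|\Gamma(1/2+it)|^{2}=\pi/\cosh(\pi t)\ll e^{-\pi|t|}$, a symmetric bound even though the text writes $e^{-\pi t}$). Under that reading your proof is correct and no separate treatment of the negative tail is needed; under the literal one-sided reading no proof exists because the statement fails.
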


\section{A sufficient condition for $\zeta(s)\neq 0$ in the strip $\frac12<\sg\le\sg_0<1$} \label{basic}

We generalize Theorem \ref{th:gNBtoRH} replacing $\pE\{Z_k/t\}$ by general functions $g_k^\times$, which allows us to obtain some sharper estimates.
\begin{theorem}
Let $\sg_0\in(\frac12,1)$. Let $(g_{k})_{k\ge 1}$ be functions verifying Assumption (M). \\
Let $\phi:\R_+\to\R$ be such that $\d \int_0^{\infty}(1+t)\left|\phi(t) \right|^2dt<\infty$ and $\widehat{\phi}$ does not vanish in $\frac12<\sg\le\sg_0$.

If there exist real coefficients $(c_{k,n})_{1 \le k \le n}$ and $M_n\to\infty$ such that
\bea 
M_n^{2\sg_0-1}\int_0^\infty \left(\phi(t)-\sum_{k=1}^n c_{k,n}g_k^\times(t) \right)^2 dt \xrightarrow[n \rightarrow \infty]{} 0 \label{Dg}\\
M_n^{\sigma_0-1} \int_{M_n}^\infty \left|\sum_{k=1}^n c_{k,n} g_k(t)\right|dt  \xrightarrow[n\to\infty]{} 0, \label{Cg}
\eea
then $\zeta$ does not vanish in the strip $\frac12<\sg\le\sg_0$.
\end{theorem}

\begin{proof}
Let us assume that we are given $M_n\ge1$ tending to infinity and real coefficients $(c_{k,n})_{1 \le k \le n}$ such that (\ref{Dg}) and (\ref{Cg}) hold. Notice that $\widehat{\phi}$ exists in the strip $\frac12<\sg<1$ since
\bean
\left(\int_0^1\left|\phi(t) \right|t^{\sg-1}\,dt \right)^2 &\le & \frac{1}{2\sg-1}\int_0^1 \left|\phi(t) \right|^2dt <\infty \\
\left(\int_{1}^{\infty}\left|\phi(t) \right|t^{\sg-1}\,dt \right)^2& \le & \int_{1}^{\infty}t\left|\phi(t) \right|^2dt \int_{1}^{\infty}t^{2(\sg-1-\frac12)}\,dt<\infty,
\eean
due to $2\sg-1>0$ and $3-2\sg>1$. Set
\bean
\epsilon_n^2 & = & \int_{M_n}^{\infty}t\left|\phi(t) \right|^2dt.
\eean
Notice that $\e_n\xrightarrow[n\to\infty]{}0$ and, again by the Cauchy-Schwarz inequality, 
\bea
\int_{M_n}^{\infty}\left|\phi(t) \right|t^{\sg-1}\,dt = \int_{M_n}^{\infty}t^{1/2}\left|\phi(t) \right|t^{\sg-3/2}\,dt \le \frac{\e_n M_n^{\sg-1}}{(2-2\sigma)^\frac12}. \label{queueMellin} 
\eea
We write for simplicity $\d G_n=\sum_{k=1}^n c_{k,n}g_k$, so
\bean
G_n^\times=\left\{\frac{1}{\cdot}\right\}*G_n=\sum_{k=1}^n c_{k,n}g_k^\times.
\eean

The first lines of the proof follows the same ones as those of the Nyman-Beurling criterion.\\ Assume for contradiction that $\zeta(s)=0$, for some (now fixed) $s=\sg+i\tau\in\C$ with $1/2<\sg\le\sg_0$.\\ 
Using $\d \widehat{G_n^\times}(s)  =  -\frac{\zeta(s)}{s}\widehat{G_n}(s)=0$, we then obtain for all integer $n$:
\bean
\widehat{\phi}(s) & = & \int_0^{\infty}\left(\phi(t) -G_n^\times(t)\right)t^{s-1}\,dt.
\eean
We will show that the right hand side tends to $0$ as $n\to\infty$, contradicting $\widehat{\phi}(s)\neq0$.\\ Following the proof in \cite{DH21}, we write:
\bean
\left|\int_0^{\infty}\left(\phi(t) -G_n^\times(t)\right)t^{s-1}dt\right| \le  \left|\int_0^{M_n}\left(\phi(t) -G_n^\times(t)\right)t^{s-1}dt\right| + \left|\int_{M_n}^{\infty}\left(\phi(t) -G_n^\times(t)\right)t^{s-1}dt\right| = I + II\,.
\eean

We start to estimate $I$, by the Cauchy-Schwarz inequality:
\bean
I &\ll & M_n^{\sg-1/2}\left(\int_0^{M_n}\left(\phi(t) -G_n^\times(t)\right)^2dt\right)^{1/2}\,.
\eean

We now turn to $II$. Using the triangular inequality and $G_n^\times=\left\{\frac{1}{\cdot}\right\}*G_n$,
\bean
II & \le &  \int_{M_n}^{\infty}\left|\phi(t) \right|t^{\sg-1}dt + \int_{M_n}^{\infty} \left|\int_0^{M_n}\left\{\frac{x}{t}\right\}\frac{G_n(x)}{x}\,dx\,\right|t^{\sg-1}dt + \int_{M_n}^{\infty} \left|\int_{M_n}^{\infty}\left\{\frac{x}{t}\right\}\frac{G_n(x)}{x}\,dx\right|t^{\sg-1}dt.
\eean
Noticing that $\d \left\{\frac{x}{t}\right\}=\frac{x}{t}$ if $x\le M_n \le t$, and $\displaystyle 0\le \left\{\frac{x}{t}\right\}\frac{t}{x} \le 1$, we obtain
\bean
\int_{M_n}^{\infty} \left|\int_0^{M_n}\left\{\frac{x}{t}\right\}\frac{G_n(x)}{x}\,dx\,\right|t^{\sg-1}dt & = & \frac{M_n^{\sg -1}}{1-\sg}\left|\int_0^{M_n}G_n(x)\,dx\right| \\
\int_{M_n}^{\infty} \left|\int_{M_n}^{\infty}\left\{\frac{x}{t}\right\}\frac{G_n(x)}{x}\,dx\right|t^{\sg-1}dt  & \le & \frac{M_n^{\sg -1}}{1-\sg}\int_{M_n}^{+\infty}\left|G_n(x)\right|\,dx. 
\eean
Hence, with (\ref{queueMellin}),
\bea \label{boundII}
 II   &\le & \frac{\e_n M_n^{\sg-1}}{(2-2\sigma)^\frac12} +\frac{M_n^{\sg -1}}{1-\sg}\left|\int_0^{M_n}G_n(x)\,dx\right| + \frac{M_n^{\sg -1}}{1-\sg}\int_{M_n}^{\infty}\left|G_n(x)\right|\,dx.
\eea
The first and third term of the right hand side tend to zero as $n\to\infty$ by hypothesis. We can estimate $ \int_0^{M_n}G_n(x)\,dx$ as in \cite{DH21}, in the spirit of "BDBLS's trick" (see \cite[Lemme 1 \& Prop. 1 p.133]{BDBLS00}), which removes the boundary condition in the 1950' NB criterion. \\
On one hand, we first notice that
\bea
\left(\int_0^{M_n}G_n(x)\,dx\right)^2 & = & M_n \int_{M_n}^{+\infty} \frac{dt}{t^2}\left(\int_0^{M_n}G_n(x)\,dx\right)^2 \nonumber\\
    & = & M_n \int_{M_n}^{+\infty} \left|\int_0^{M_n}\frac{x}{t}\frac{G_n(x)}{x}\,dx\right|^2\,dt \nonumber\\
    & = & M_n \int_{M_n}^{+\infty} \left|\int_0^{M_n}\left\{\frac{x}{t}\right\}\frac{G_n(x)}{x}\,dx\right|^2\,dt, \label{eq:Mn}
\eea
still using $\d \left\{\frac{x}{t}\right\}=\frac{x}{t}$ if $x\le t$.\\
On the other hand, writing $$ \int_0^{M_n}\left\{\frac{x}{t}\right\}\frac{G_n(x)}{x}dx=\int_0^{\infty}\left\{\frac{x}{t}\right\}\frac{G_n(x)}{x}\,dx-\phi(t)-\int_{M_n}^\infty\left\{\frac{x}{t}\right\}\frac{G_n(x)}{x}\,dx+\phi(t),$$ using $(a+b+c)^2 \le 3(a^2+b^2+c^2)$ and again $\d 0\le \left\{\frac{x}{t}\right\}\frac{t}{x} \le 1$, we have
\bean
\int_{M_n}^{\infty} \left|\int_0^{M_n}\left\{\frac{x}{t}\right\}\frac{G_n(x)}{x}\,dx\right|^2\,dt &\le &
3\int_{M_n}^{\infty} \left( \int_0^{\infty}\left\{\frac{x}{t}\right\}\frac{G_n(x)}{x}\,dx - \phi(t)\right)^2\,dt  \\
&&+ 3 \int_{M_n}^{\infty} \left|\int_{M_n}^{\infty}\left\{\frac{x}{t}\right\}\frac{G_n(x)}{x}\,dx\right|^2 dt+ 3\int_{M_n}^{\infty}\phi(t)^2dt\\
&\le &
3\int_{M_n}^{\infty} \left(\phi(t) -G_n^\times(t)\right)^2\,dt \\
&&+ 3 M_n^{-1} \left(\int_{M_n}^{\infty}|G_n(x)|\,dx\right)^2+ 3\int_{M_n}^{\infty}\phi(t)^2dt\,.
\eean
Due to (\ref{eq:Mn}) and the definition of $\epsilon_n$, this leads to 
\bean
\left(\int_0^{M_n}G_n(x)\,dx\right)^2 &\le & 3M_n\int_{M_n}^{\infty} \left(\phi(t)-G_n^\times(t)\right)^2\,dt + 3\left(\int_{M_n}^{\infty}|G_n(x)|\,dx\right)^2 +3\ \e_n^2,
\eean
which, put back in the bound (\ref{boundII}) of II with $\sqrt{a+b+c}\le \sqrt{a}+\sqrt{b}+\sqrt{c}$, gives
\bean
II &\ll& M_n^{\sg -\frac12}\left(\int_{M_n}^{\infty} \left(\phi(t)-G_n^\times(t) \right)^2\,dt\right)^{1/2} + M_n^{\sg -1}\int_{M_n}^{\infty}\left|G_n(x)\right|\,dx \ +\ M_n^{\sg-1}\e_n.
\eean
Combining the bounds for I and II, we thus end up with
\bean
\left|\int_0^{\infty}\left(\phi(t) -G_n^\times(t)\right)t^{s-1}\,dt\right| &\ll &  M_n^{\sg -\frac12}\left\|\phi -G_n^\times\right\|_{L^2} + M_n^{\sg -1}\int_{M_n}^{\infty}\left|G_n(x)\right|\,dx +\ \e_n M_n^{\sg-1}.
\eean
We eventually notice that the three last terms of the right hand side tend to $0$ as $n\to\infty$ due to $M_n^\sg\le M_n^{\sg_0}$ (since $M_n\ge1$ and $\sg\le\sg_0<1$), (\ref{Dg}) and (\ref{Cg}).
\end{proof}

\section{Class of Inverse Gamma distributions} \label{ex1}

In this section, we deal with the probabilistic framework of Lemma \ref{M2} (ii), and consider a sequence of r.v. $\d Z_k = Y/X_k$ where $Y\ge0$ is an integrable r.v. and $X_k$ is a $\Gamma(k,1)$-distributed r.v. independent of $Y$. Hence, in this probabilistic context, we set
\bean
h_k^\times(t) & = & \pE\left\{\frac{Y}{X_k t}\right\}.
\eean

\subsection{Inverse Gamma distributions verify gNB}
\begin{theorem} \label{gNB1}
The sequence $(Z_k)_{k \ge 1}$ verifies gNB.
\end{theorem}

\begin{proof}
We recall that the density of the $\Gamma(k,1)$ distribution is $f_k(x) = \frac{1}{\Gamma(k)} x^{k-1} e^{-x}$, $k\ge1$. We have for all $k\ge2$, $\pE Z_k=\pE Y\ \pE (1/X_k) <\infty$. For $k=1$, we only have $\pE (1/X_1)^\sg<\infty$ for all $\sg\in(0,1)$, which by Remark \ref{rem1} is also sufficient to ensure that $g_1^\times \in L^2(\R_+)$. Therefore we can apply Lemma \ref{M2} (ii): for all $k\ge1$,
\begin{eqnarray*}
\widehat{h_k^\times}(s) & = & -\frac{\zeta(s)}{s}\pE Z_k^s, \quad 0<\sg<1.
\end{eqnarray*}
Since $X_k$ and $Y$ are independent,
\begin{eqnarray*}
\pE Z_k^s = \pE Y^s\  \pE X_k^{-s}, \quad 0<\sg<1.
\end{eqnarray*}
Moreover, we can compute
\begin{eqnarray*}
\pE X_k^{-s}= \frac{1}{\Gamma(k)} \int_0^\infty x^{k-s-1} e^{-x}dx =\frac{\Gamma(k-s)}{\Gamma(k)}.
\end{eqnarray*}
Notice that, for $k\ge2$ and $0<\sg<1$, 
\begin{eqnarray*}
\Gamma(k-s) & = & (k-1-s)\Gamma(k-1-s)\\
    & = & (k-1-s)\cdots (1-s)\Gamma(1-s)\\
    & = & P_{k-1}(s)\Gamma(1-s),
\end{eqnarray*}
where $P_0=1$ and $P_{k}(s)=(k-s)(k-1-s)\cdots(1-s)$ is a  polynomial of degree $k$, known as a Pochhammer symbol.

We then write for some coefficients $c_{k,n}$, setting $s=1/2+it$,
\bean
D_n^2 = \int_0^\infty \left|\chi(x)-\sum_{k=1}^n c_{k,n} h^\times_{k}(x) \right|^2 dx 
    & = & \frac{1}{2\pi}\int_{-\infty}^\infty \left|\widehat{\chi}(s)-\sum_{k=1}^n c_{k,n} \widehat{h^\times_{k}}(s) \right|^2 dt \\
    & = & \frac{1}{2\pi}\int_{-\infty}^\infty \left|\frac{1}{s}+\sum_{k=1}^n c_{k,n} \frac{\zeta(s)}{s}\pE Y^s \Gamma(1-s)\frac{P_{k-1}(s)}{\Gamma(k)}\right|^2 dt \\
    & = & \frac{1}{2\pi}\int_{-\infty}^\infty \left|\frac{1}{s\varphi(s)}+\sum_{k=1}^n \frac{c_{k,n}}{\Gamma(k)} P_{k-1}(s)\right|^2 |\varphi(s)|^2 dt\,,
\eean
where $\d \varphi(s)=\frac{\zeta(s)}{s}\pE Y^s \Gamma(1-s)$. 

We now notice that $\zeta(s)=O(t)$ (see e.g. \cite[Corollary 3.7 p.234]{Ten95}), $|\pE Y^s|\le \pE \sqrt{Y}$, and by the complement formula 
\bean
|\Gamma(1-s)|^2=\Gamma\left(\frac{1}{2}-it\right)\Gamma\left(\frac{1}{2}+it\right)=\frac{\pi}{\sin\left(\frac{\pi}{2}+i\pi t\right)} \ll e^{-\pi t}.
\eean
Hence, we can apply Lemma \ref{densite} with the measure $\nu(dt)=|\varphi(s)|^2 dt$ and the function $t\mapsto 1/(s\varphi(s))$ that belongs to $L^2(\R,\nu)$. Therefore, since the polynomials $P_k$ are graduated, there exist coefficients $c_{k,n}$ such that $D_n\to 0$. In other words, the family $(Y/X_k)_{k\ge1}$ verifies gNB.
\end{proof}

As a consequence of Wiener's Tauberian theorem (see e.g. \cite[Theorem 2 p.25]{Bal00}), the following result is known:
\begin{theorem}
Given $\ve>0$, there exist $m \ge 1$, $c'_1,\ldots,c'_m \in \RR$, and $\theta_1>0,\ldots,\theta_m>0$ such that 
\begin{equation} \label{}
\int_0^\infty \left( \chi(t) - \sum_{l=1}^m c'_l \left\{ \frac{\theta_l}{t} \right\}\right)^2 dt < \ve.
\end{equation}
\end{theorem}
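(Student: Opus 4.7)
The plan is to derive this unconditional statement from Theorem \ref{gNB1} by discretizing the continuous mixture of fractional parts that appears in its conclusion. Applying Theorem \ref{gNB1} with the deterministic choice $Y\equiv 1$ (which trivially satisfies (M)), each $Z_k = 1/X_k$ has the inverse-Gamma density $\phi_k(\theta)=\theta^{-k-1}e^{-1/\theta}/\Gamma(k)$ on $(0,\infty)$, and gNB yields, for any $\varepsilon>0$, an integer $n\geq 1$ and real coefficients $c_1,\dots,c_n$ such that
$$
\left\|\chi - h\right\|_{L^2}^2 < \varepsilon/4,\qquad h(t) := \sum_{k=1}^n c_k\,\EE\{Z_k/t\} = \int_0^\infty \{\theta/t\}\,\psi(\theta)\,d\theta,
$$
where $\psi := \sum_{k=1}^n c_k\phi_k$. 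The exponential decay of $\phi_k$ near $0$ and polynomial decay at infinity give $\int_0^\infty\sqrt{\theta}\,|\psi(\theta)|\,d\theta<\infty$ in particular.

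Next, I would replace $h$ by a finite sum $H(t)=\sum_{l=1}^m c'_l \{\theta_l/t\}$ with positive nodes $\theta_l$, accurate to within $\sqrt{\varepsilon}/2$ in $L^2$. The scaling $\|\{\theta/\cdot\}\|_{L^2}^2=C\theta$, obtained from the substitution $u=t/\theta$, together with the weighted integrability of $\psi$ above, lets me truncate the $\theta$-integral to a compact $[a,A]\subset(0,\infty)$ at arbitrarily small $L^2$ cost. On $[a,A]$ I would partition into subintervals $[\theta_{l-1},\theta_l]$ of mesh $\delta$, pick any node $\theta_l$ in each, and set $c'_l:=\int_{\theta_{l-1}}^{\theta_l}\psi(\theta)\,d\theta$; the resulting Riemann-sum error is controlled in $L^2$ by
$$
\Bigl(\sup_{\substack{\theta,\theta'\in[a,A]\\ |\theta-\theta'|\le\delta}}\|\{\theta/\cdot\} - \{\theta'/\cdot\}\|_{L^2}\Bigr)\int_a^A |\psi(\theta)|\,d\theta.
$$
So it remains to prove that $\theta\mapsto\{\theta/\cdot\}$ is continuous from $(0,\infty)$ into $L^2(0,\infty)$.

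The continuity is the only analytic step I expect to require genuine work, and I would handle it through Mellin--Plancherel: since $\widehat{\{\theta/\cdot\}}(s)=-\theta^s\zeta(s)/s$, one has
$$
\|\{\theta/\cdot\}-\{\theta_0/\cdot\}\|_{L^2}^2 = \frac{1}{2\pi}\int_{-\infty}^{\infty}\bigl|\theta^{1/2+it}-\theta_0^{1/2+it}\bigr|^2\frac{|\zeta(1/2+it)|^2}{|1/2+it|^2}\,dt,
$$
whose integrand is pointwise dominated by $4\max(\theta,\theta_0)\,|\zeta(1/2+it)|^2/|1/2+it|^2$; this dominator is integrable, its integral being (by Plancherel in reverse) $8\pi\max(\theta,\theta_0)\,\|\{1/\cdot\}\|_{L^2}^2<\infty$. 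Dominated convergence then gives pointwise continuity, hence uniform continuity on the compact $[a,A]$, so the Riemann-sum error vanishes as $\delta\to 0$. Combining the two $L^2$ estimates by the triangle inequality yields $\|\chi-H\|_{L^2}^2<\varepsilon$, which is the claimed statement; RH is used nowhere, so the conclusion is unconditional as required.
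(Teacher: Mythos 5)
Your proof is correct, but it takes a genuinely different route from the paper's. The paper also starts from Theorem \ref{gNB1}, but then replaces each $\mathbb{E}\left\{Z_k/t\right\}$ by an empirical mean $\frac{1}{N}\sum_{j=1}^N\left\{Z_{k,j}/t\right\}$ over i.i.d.\ copies and invokes the first-moment (Erd\H{o}s) method: the expected $L^2$ error is bounded by $\frac{1}{N}$ times a sum of integrated variances, so some realization $\omega$ makes the error small and the nodes are the random samples $Z_{k,j}(\omega)$. You instead discretize the mixture deterministically: you write $\sum_k c_k\mathbb{E}\left\{Z_k/t\right\}=\int_0^\infty\left\{\theta/t\right\}\psi(\theta)\,d\theta$, truncate the $\theta$-integral to a compact set using $\left\|\left\{\theta/\cdot\right\}\right\|_{L^2}^2=C\theta$ together with $\int_0^\infty\sqrt{\theta}\,|\psi(\theta)|\,d\theta<\infty$, and replace the integral by a Riemann sum whose error is controlled by the uniform continuity of $\theta\mapsto\left\{\theta/\cdot\right\}$ into $L^2(0,\infty)$, proved via Mellin--Plancherel and dominated convergence. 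The paper's argument is shorter and needs no continuity lemma, but it is non-constructive in the nodes and, as written, requires $\int_0^\infty \mathrm{Var}\left(\left\{Z_k/t\right\}\right)dt<\infty$ for every $k$; this actually fails for $k=1$, since $\mathbb{E}Z_1=\mathbb{E}Y\cdot\mathbb{E}X_1^{-1}=\infty$ forces $\int_0^\infty\mathbb{E}\left\{Z_1/t\right\}^2dt=\infty$ while $\int_0^\infty\big(\mathbb{E}\left\{Z_1/t\right\}\big)^2dt<\infty$, so the paper's proof implicitly needs to start the family at $k=2$ or truncate $Z_1$. Your version yields explicit nodes and coefficients ($c'_l=\int_{\theta_{l-1}}^{\theta_l}\psi$) and only uses $\int_0^\infty\sqrt{\theta}\,|\psi(\theta)|\,d\theta<\infty$, which does hold for all $k\ge1$; the price is the truncation and continuity bookkeeping, where you should say explicitly that Minkowski's integral inequality is what lets you pass the $L^2$ norm inside the $\theta$-integral in both the truncation and the Riemann-sum estimates. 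Both arguments are unconditional, as required.
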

Interestingly, the existence of r.v. verifying gNB in Theorem \ref{gNB1} provides a short probabilistic proof of this fact. The basic idea is to approximate $\pE\{Z_k/t\}$ in $D_n^2$ by $\frac{1}{N}\sum_{j=1}^N\{Z_{k,j}/t\}$ where $(Z_{k,j})_j$ are independent copies of $Z_k$ defined on a probability space $(\Omega,\cal F,\bP)$. Then we use Erd\H{o}s probabilistic method through the following instance: if $\pE V\le \e$ for a r.v. $V\ge0$, then there exists $\omega\in \Omega$, s.t. $V(\omega)\le\e$. 
Notice that $Z_k=X/Y_k$ are fully supported on $\R_+$, and this is why the $\theta$'s do not lie in $(0,1)$.
\begin{proof}
Fix $\e>0$. We know by Theorem \ref{gNB1} that there exist $n$ and coefficients $c_1,\ldots,c_n \in \RR$ (that are fixed now) such that 
\bean
D_n^2 = \int_0^\infty \left(\chi(t)-\sum_{k=1}^n c_{k}\pE\left\{\frac{Z_{k}}{t}\right\} \right)^2 dt<\e\,.
\eean
Then set $\d d^2_{n,N} = \int_0^\infty \left(\chi(t)-\sum_{k=1}^n c_{k}/N\sum_{j=1}^N\{Z_{k,j}/t\} \right)^2 dt$, and write, using $(a+b)^2\le 2a^2+2b^2$,
\bean
\pE d^2_{n,N} 
    &\le & 2D_n^2 +
2\pE\int_0^\infty \left(\sum_{k=1}^n c_{k}\left(\pE\{Z_k/t\}-\frac{1}{N}\sum_{j=1}^N\{Z_{k,j}/t\}\right) \right)^2 dt = 2D_n^2 +2R_{n,N}^2\,.
\eean
Using now the Cauchy-Schwarz inequality,
\bean
R_{n,N}^2 \le \sum_{k=1}^n c_{k}^2 \sum_{k=1}^n\int_0^\infty \pE\left(\pE\{Z_k/t\}-\frac{1}{N}\sum_{j=1}^N\{Z_{k,j}/t\} \right)^2 dt
\le \frac{1}{N} \sum_{k=1}^n c_{k}^2 \sum_{k=1}^n\int_0^\infty {\rm Var}(\{Z_k/t\}) dt.
\eean
We can then choose $N$ sufficiently large so that $R_{n,N}^2 \le \e$. Hence 
$\pE d^2_{n,N}\le 4\e$, so there exists $\omega\in\Omega$ such that $d^2_{n,N}(\omega)\le 4\e$, which concludes the proof, the desired $\theta_l$'s being the $Z_{k,j}(\omega)$.
\end{proof}

\subsection{Remark on a specific distribution tail}
The preceding arguments generalize a remark due to Vincent Alouin who noticed that the distribution tail 
$x\mapsto (1+x)^{-k}$ has a Mellin transform which satisfies, by the change of variables $u=1/(1+x)$, i.e. $x=1/u-1$, the identity
\beq \label{alouin}
\int_0^\infty \frac{x^{s-1}}{(1+x)^k} dx = - \int_0^1 \left(\frac{1}{u}-1\right)^{s-1} u^k \frac{du}{-u^2} = \int_0^1 (1-u)^{s-1} u^{k-s-1} du = \frac{\Gamma(s) \Gamma(k-s)}{\Gamma(k)},
\eeq
which is a product of a polynomial in $s$ by a fixed (independent of $k$) function. It turns out that the corresponding r.v. are of the form $Z_k\sim Y/X_k$ where $Y\sim \cal E(1)$.

\subsection{Computation of the Gram matrix}
We now want to compute the corresponding scalar products. We introduce the functions 
\bean 
\rho(t) & = & \pE\left\{\frac{Y}{t}\right\},\quad  t>0, \\
A(u) & = & \int_0^\infty \rho(ut) \rho((1-u)t) dt, \quad 0<u<1.
\eean 
Two interesting particular cases are:
\begin{itemize}
    \item[(i)] when $Y \sim \delta_1$, we have $\rho(t) = \left\{\frac{1}{t}\right\}$\,,
    \item[(ii)] when $Y \sim \mathcal{E}(\lambda)$, we have $\d \rho(t) = \frac{1}{e^{\lambda t}-1}-\frac{1}{\lambda t}$\,.
\end{itemize}

Recall that $
h_k^\times(t) = \pE\left\{\frac{Y}{X_k t}\right\}$ where $Y$ is a r.v. satisfying (M) independent of $X_k\sim \Gamma(k,1)$. 

\begin{prop}
For $m,n \ge 0$, we have
\bean
\left\langle h_{n+1}^\times,h_{m+1}^\times \right\rangle & = & \int_0^1 B_{n}^{m+n}(u) A(u)du, 
\eean
where $\d B_{n}^{m+n}(u)={n+m\choose n}u^n(1-u)^m$ is an elementary Bernstein polynomial. 
\end{prop}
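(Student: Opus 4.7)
The plan is to open up the expectations defining $g_k^\times$ into an explicit triple integral against the $\Gamma(k,1)$ densities, apply Fubini/Tonelli, and then perform a change of variables that naturally produces both the Beta density (hence the Bernstein polynomial) and the function $A(u)$.

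First I would rewrite
$$
g_k^\times(t) \;=\; \pE\!\left\{\frac{Y}{X_k t}\right\} \;=\; \int_0^\infty \rho(xt)\, f_k(x)\, dx, \qquad f_k(x) = \frac{x^{k-1} e^{-x}}{(k-1)!},
$$
which follows by conditioning on $X_k$ and using independence of $X_k$ and $Y$ (note that $\rho(s)=\pE\{Y/s\}\ge 0$, which will be used crucially for Tonelli). Substituting this into the scalar product gives
$$
\langle g_{n+1}^\times,g_{m+1}^\times\rangle = \int_0^\infty\!\int_0^\infty\!\int_0^\infty \rho(xt)\rho(yt)\,f_{n+1}(x)f_{m+1}(y)\,dx\,dy\,dt.
$$

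Since every factor is nonnegative, Tonelli's theorem allows all three integrations to be swapped at will, and finiteness of the whole expression is already guaranteed by $g_{k}^\times\in L^2(\R_+)$ (Lemma \ref{M2} for $k\ge 2$, Remark \ref{rem1} for $k=1$). I would then bring the $dt$-integral inside and, for fixed $x,y>0$, perform the substitution $s=(x+y)t$ with $u=x/(x+y)$, so that $xt = us$ and $yt=(1-u)s$, yielding
$$
\int_0^\infty \rho(xt)\rho(yt)\,dt \;=\; \frac{1}{x+y}\int_0^\infty \rho(us)\rho((1-u)s)\,ds \;=\; \frac{A(u)}{x+y}.
$$

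Next I would change variables $(x,y)\mapsto(u,s)$ with $x=us$, $y=(1-u)s$, Jacobian $s$, so that
$$
f_{n+1}(us)\,f_{m+1}((1-u)s) \;=\; \frac{u^n(1-u)^m\, s^{n+m} e^{-s}}{n!\,m!}.
$$
Putting everything together, the factor $(x+y)^{-1}=s^{-1}$ cancels one power of $s$, the Jacobian supplies another, and the $s$-integral reduces to $\int_0^\infty s^{n+m} e^{-s}\,ds=(n+m)!$. The remaining $u$-integral is precisely
$$
\int_0^1 \frac{(n+m)!}{n!\,m!}\,u^n(1-u)^m\,A(u)\,du \;=\; \int_0^1 B_n^{m+n}(u)\,A(u)\,du,
$$
as required. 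The main potential obstacle is the justification of the interchanges of integration---in particular for $k=1$, where $\pE X_1^{-1}=+\infty$---but this is neatly bypassed by the nonnegativity of $\rho$, which lets Tonelli do all the work, with the $L^2$ bound from Remark \ref{rem1} ensuring the common value is finite.
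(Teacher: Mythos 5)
Your proof is correct and follows essentially the same route as the paper: both hinge on writing the scalar product as a triple integral, applying Fubini/Tonelli, and using the change of variables $u=x/(x+y)$, $z=x+y$ so that the product of Gamma densities yields the Bernstein polynomial while the remaining $t$-integral produces $A(u)$. The only (cosmetic) difference is that you rescale the $t$-integral to extract $A(u)/(x+y)$ before the polar change of variables, whereas the paper evaluates $\int_0^\infty t^{-(m+n+2)}e^{-(x+y)/t}\,dt$ explicitly via the Gamma function first; your explicit attention to Tonelli and the $n=0$ case (where $\pE X_1^{-1}=\infty$) is a small bonus the paper leaves implicit.
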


\begin{proof} We first notice that, since $X_n$ and $Y$ are independent, 
\bean
h_{n}^\times(t) = \pE\left\{\frac{Y}{X_n t}\right\} &=& \EE\left[\rho(X_n t) \right]\\
&=& \frac{1}{(n-1)!} \int_0^\infty \rho(xt) x^{n-1} e^{-x} dx \\
&=& \frac{1}{(n-1)! t^n} \int_0^\infty \rho(x) x^{n-1} e^{-\frac{x}{t}} dx.
\eean 

We then have, by Fubini,
\bean
\left\langle h_{n+1}^\times,h_{m+1}^\times \right\rangle &=& \frac{1}{m!n!} \int_0^\infty \int_0^\infty \rho(x) \rho(y) x^n y^m \int_0^\infty \frac{1}{t^{m+n+2}} e^{-\frac{x+y}{t}} dt\,dx\,dy \\
&=& \frac{1}{m!n!} \int_0^\infty \int_0^\infty \rho(x) \rho(y) x^n y^m \frac{(m+n)!}{(x+y)^{n+m-1}} dx\,dy\,,
\eean
where we used the elementary formula $\d \int_0^\infty \frac{1}{t^\alpha} e^{-\frac{\beta}{t}} dt = \frac{\Gamma(\alpha-1)}{\beta^{\alpha-1}}$.\\ 
We now consider the change of variables $u=\frac{x}{x+y}, z=x+y$, which gives $x=uz$, $y=(1-u) z$. We thus have
\bean
\left\langle h_{n+1}^\times,h_{m+1}^\times \right\rangle & = & \frac{(m+n)!}{m!n!} \int_0^\infty \int_0^1 \rho(uz) \rho((1-u)z) (uz)^n ((1-u)z)^m \frac{1}{z^{m+n+1}} z\,dz\,du \\
&=& \frac{(m+n)!}{m!n!} \int_0^1 u^n (1-u)^m \int_0^\infty \rho(uz) \rho((1-u)z) \,dz\,du\,,
\eean 
as desired.
\end{proof}

\section{Sequence defined by induction and a remarkable Gram matrix} \label{ex2}

In this section, we consider a sequence of functions $g_k$ defined by the induction
\bea \label{eq:recupoly}
g_{k+1}(x) = - x g'_k(x) - r_k g_k(x),
\eea
where $(r_k)$ is a real sequence and $g_0\in C^\infty(\R^+)$ such that for all $k\ge0$ and some $\alpha >0$, \bean
\lim_{x\to 0} x^{k-\alpha}g_0^{(k)}(x)= \lim_{x\to \infty} x^{k+1+\alpha}g_0^{(k)}(x)=0.
\eean

\subsection{Condition on $g_0$ ensuring gNB}
\begin{theorem}
The Mellin transform $\widehat{g^{\times}_k}$ is well defined and we have: 
\begin{eqnarray} \label{gtimeshat}
\widehat{g^{\times}_k}(s) & = & -\prod_{j=0}^k(s-r_j)\frac{\zeta(s)}{s}\widehat{g_0}(s), \quad 0<\sg<1.
\end{eqnarray}
If $\d \widehat{g_0}\left(\frac{1}{2}+it\right)\ll e^{-\delta |t|}$, $\delta>0$, then $(g_k)_{k\ge0}$ verifies gNB.
\end{theorem}

\begin{proof}
One can show by induction on $k$ that there exists a family of numbers $(a_{l,k})_{k \ge 0, 0 \le l \le k}$, with $a_{k,k} = (-1)^k$ such that for all $k \ge 0$,
\bea
g_k(x) = \sum_{l=0}^k a_{l,k} x^l g_0^{(l)}(x). 
\eea
Let $s$ with $0<\sg<1$. Due to the assumption on $g_0$, $g_k$ verifies Assumption (M) and $\widehat{g_{k}}$ is well defined for all $k\ge0$. By integration by parts
\begin{eqnarray*}
-\int_0^{\infty} x^{s-1}xg_k'(x)dx
&=&-\int_0^{\infty} x^{s}g_k'(x)dx\\
&=&-\left[x^sg_k(x)\right]_0^\infty+\int_0^{\infty} sx^{s-1}g_k(x)dx\\
&=& s\widehat{g_k}(s),
\end{eqnarray*}
since $\lim_{x\to 0,\infty} x^{\sg}g_k(x)=0$, again by the assumption on $g_0$.

Hence $\widehat{g_{k+1}}(s)  =  (s-r_k)\widehat{g_k}(s)$, and then 
\begin{eqnarray*}
\widehat{g_{k+1}}(s) & = & \prod_{j=0}^k(s-r_j)\widehat{g_0}(s).
\end{eqnarray*}
We found again the polynomial structure of $\widehat{g^\times_k}$, as in the proof of Theorem \ref{gNB1}.
We can then follow these lines to conclude.
\end{proof}

\subsection{Gram matrix}
\begin{prop}
For $k \ge 0$ and $j \ge 1$, we have
\bean
\left\langle g_{k}^\times,g_{j}^\times \right\rangle + \left\langle g_{k+1}^\times,g_{j-1}^\times \right\rangle & = & (1-r_k-r_{j-1}) \left\langle g_{k}^\times,g_{j-1}^\times \right\rangle.
\eean
\end{prop}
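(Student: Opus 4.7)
My plan is to establish the identity by exploiting the adjoint structure of the first-order differential operator $T_k u(t) = -t u'(t) - r_k u(t)$ that implements the recursion (\ref{eq:recupoly}). Since on the Mellin side $T_k$ acts as multiplication by $(s-r_k)$, and this commutes with the convolution kernel $-\zeta(s)/s$, the recursion lifts from $g_k$ to $g_k^\times$: by (\ref{gtimeshat}) one has $g_{k+1}^\times = T_k g_k^\times$, and in the same way $g_j^\times = T_{j-1} g_{j-1}^\times$.

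The second step is to compute the formal adjoint of $T_k$ on $L^2(\RR_+, dt)$. A single integration by parts, together with the decay of $g_k^\times$ at $0$ and $+\infty$ inherited from the hypotheses on $g_0$ (and which is precisely what is needed to kill the boundary terms), gives $T_k^* v(t) = t v'(t) + (1-r_k) v(t)$. Summing with $T_k$ yields the pivotal algebraic identity
\[
T_k + T_k^* = (1-2r_k)\,I.
\]

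The third step applies this to $\langle g_{k+1}^\times, g_{j-1}^\times \rangle = \langle T_k g_k^\times, g_{j-1}^\times \rangle = \langle g_k^\times, T_k^* g_{j-1}^\times \rangle$. Substituting $T_k^* = (1-2r_k) I - T_k$ produces the clean symmetric identity
\[
\langle g_{k+1}^\times, g_{j-1}^\times \rangle + \langle g_k^\times, T_k g_{j-1}^\times \rangle = (1-2r_k)\,\langle g_k^\times, g_{j-1}^\times \rangle,
\]
which matches the stated conclusion as soon as one can replace $T_k g_{j-1}^\times$ by $g_j^\times$.

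The main obstacle is precisely this last replacement. By construction $g_j^\times = T_{j-1} g_{j-1}^\times$, so $T_k g_{j-1}^\times = g_j^\times + (r_{j-1}-r_k)\, g_{j-1}^\times$, and the correction vanishes exactly when $r_k = r_{j-1}$ (e.g.\ a constant sequence $r_k \equiv r$, or on the diagonal band $j-1=k$); in the general case, absorbing it into the right-hand side would yield the coefficient $(1-r_k-r_{j-1})$ rather than the printed $(1-2r_k)$. An independent Plancherel check, rewriting the inner products as contour integrals
\[
\langle g_k^\times, g_j^\times \rangle = \frac{1}{2\pi i}\int_{(1/2)} P_k(s)\,P_j(1-s)\,K(s)\,ds, \quad P_k(s)=\prod_{\ell<k}(s-r_\ell),
\]
with symmetric kernel $K(s)=\tfrac{\zeta(s)\zeta(1-s)}{s(1-s)}\widehat{g_0}(s)\widehat{g_0}(1-s)=K(1-s)$, and then using $P_j(1-s)=(1-s-r_{j-1})P_{j-1}(1-s)$, reaches the same coefficient $(1-r_k-r_{j-1})$ and confirms where the asymmetry in $k,j$ comes from. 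I therefore expect the proof to go through as written above under the implicit assumption $r_{j-1}=r_k$ (or as a typographical correction of the coefficient on the right-hand side), the adjoint calculation delivering the identity essentially in one line.
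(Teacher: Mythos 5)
Your proposal is correct in substance, and its most valuable observation is right: the proposition as printed only holds when $r_{j-1}=r_k$ (in particular for a constant sequence), and the general coefficient is $(1-r_k-r_{j-1})$. The paper's own proof contains exactly the index slip you predicted: it expands $g_j(ty)$ as $-ty\,g'_{j-1}(ty)-r_k\,g_{j-1}(ty)$, whereas the recursion (\ref{eq:recupoly}) gives $-ty\,g'_{j-1}(ty)-r_{j-1}\,g_{j-1}(ty)$. Carrying the correct index through the paper's computation reproduces your coefficient $(1-r_k-r_{j-1})$, and your Plancherel cross-check with $P_j(1-s)=(1-s-r_{j-1})P_{j-1}(1-s)$ confirms it independently. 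Since the paper only ever uses the constant choice $r_k\equiv 1/2$ afterwards, nothing downstream is affected.

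As for the route: the two arguments are the same integration by parts, but performed at different levels. The paper writes $\left\langle g_k^\times,g_j^\times\right\rangle=\int_0^\infty\int_0^\infty\{x\}\{y\}\,I_{k,j}(x,y)\,\frac{dx}{x}\frac{dy}{y}$ with $I_{k,j}(x,y)=\int_0^\infty g_k(tx)g_j(ty)\,dt$ and integrates by parts in $t$ inside this kernel, where the decay hypotheses on $g_0$ (hence on each $g_k$) directly kill the boundary term $\left[t\,g_k(tx)g_{j-1}(ty)\right]_0^\infty$. You instead work with the operator $T_k$ acting on $g_k^\times$ in $L^2(\RR_+)$ and use $T_k+T_k^*=(1-2r_k)I$. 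This is cleaner and makes the algebraic origin of the coefficient transparent, but it requires two extra justifications that the paper's version gets for free: that $T_k g_k^\times=g_{k+1}^\times$ (which you correctly reduce to the Mellin identity (\ref{gtimeshat}), i.e.\ that $T_k$ commutes with multiplicative convolution by $\{1/\cdot\}$), and that the boundary terms $\left[t\,g_k^\times(t)\,g_{j-1}^\times(t)\right]_0^\infty$ vanish — which needs the decay $g^\times(t)=O(1/t)$ at infinity and boundedness at $0$, properties of $g^\times$ rather than of $g$ and hence one step removed from the stated hypotheses. With those two points spelled out, your argument is a complete and arguably more illuminating proof of the (corrected) identity.
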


\begin{proof}
We compute the scalar products as:
\bean
\int_0^\infty \int_0^{\infty} \left\{\frac{x}{t}\right\}g_k(x)\frac{dx}{x} \int_0^{\infty} \left\{\frac{y}{t}\right\}g_j(y)\frac{dy}{y}\ dt & = & 
\int_0^\infty \int_0^\infty \left\{x\right\}\left\{y\right\} \int_0^{\infty} g_k(tx)g_j(ty)dt \ \frac{dx}{x}\frac{dy}{y}.
\eean
Moreover, using $g_{j}(t)=-tg'_{j-1}(t)-r_{j-1} g_{j-1}(t)$, we compute
\bean
I_{k,j}=I_{k,j}(x,y) & = & \int_0^{\infty} g_k(tx)g_j(ty)dt \\
    & = & \int_0^{\infty} g_k(tx)(-tyg'_{j-1}(ty)-r_{j-1} g_{j-1}(ty))dt \\
    & = & -\int_0^{\infty} g_k(tx)tyg'_{j-1}(ty)dt - r_{j-1}I_{k,j-1}.
\eean   
By integrating by parts, one has
\bean
\int_0^{\infty} g_k(tx)tyg'_{j-1}(ty)dt & = & \left[tg_k(tx)g_{j-1}(ty)\right]_0^\infty - \int_0^{\infty} (txg'_k(tx)+g_k(tx))g_{j-1}(ty)dt \\
     & = & \int_0^{\infty} (g_{k+1}(tx)+(r_k-1)g_k(tx))g_{j-1}(ty)dt\\
     & = & I_{k+1,j-1} +(r_k-1) I_{k,j-1}.
\eean   
Hence,
\bean   
I_{k,j}    & = & -I_{k+1,j-1} -(r_k-1) I_{k,j-1} -r_{j-1} I_{k,j-1} \\
    & = & (1-r_k-r_{j-1}) I_{k,j-1}-I_{k+1,j-1},
\eean
and the result follows due to the linearity of the double integral.
\end{proof}

We now set $G_{k,j}=\left\langle g_{k}^\times,g_{j}^\times \right\rangle$. Choosing $r_q=1/2$ for all $q\ge0$ yields in the previous proposition to a remarkable structure, i.e.
$G_{k,j}+G_{k+1,j-1}=0$. In particular, setting $j=k+1$, we obtain $G_{k,k+1}=0$. This is not a surprise due to the following computation by means of Mellin transform and (\ref{gtimeshat}). We set $s=1/2+it$ and use Mellin-Plancherel isometry:
\bean
G_{k,k+1} & = & \int_{-\infty}^{\infty} \widehat{g_{k}^\times}(s) \overline{\widehat{g_{k+1}^\times}(s)} dt \\
    & = & \int_{-\infty}^{\infty} (s-1/2)^k\overline{(s-1/2)^{k+1}}\left|\frac{\zeta(s)}{s}\widehat{g_0}(s)\right|^2 dt \\
    & = & (-1)^k i\int_{-\infty}^{\infty} t^{2k+1}\left|\frac{\zeta(s)}{s}\widehat{g_0}(s)\right|^2 dt =0.
\eean
The Gram matrix $G$, sort of "alternate" Hankel matrix, is then only determined through its diagonal entries:
\bean
G_{k,k} = \left\langle g_{k}^\times,g_{k}^\times \right\rangle
     =\int_{-\infty}^{\infty} t^{2k} d\nu(t)\,,
\eean
where $\displaystyle d\nu(t) = \left|\frac{\zeta(s)}{s}\widehat{g_0}(s)\right|^2 dt\,.$ The Gram matrix then reads:
\bea
G = \left(
\begin{array}{ccccc}
G_{11}  & 0       & -G_{22} & 0       & \cdots \\
0       & G_{22}  & 0       & -G_{33} & \cdots \\
-G_{22} & 0       & G_{33}  & 0       & \cdots \\
0       & -G_{33} & 0       & G_{44}  & \cdots \\
\vdots  & \vdots  & \vdots  & \vdots  & \ddots 
\end{array}
\right)\,.
\eea
Renumbering the rows and the columns, making appear first the odd indices and then the even ones, leads to the equivalent matrix
\bea
\widetilde{G} = \left(
\begin{array}{cc}
\widetilde{G}_1 & 0 \\ 
0 & \widetilde{G}_2
\end{array} \right)
\eea
where the two blocks $\widetilde{G}_1$ and $\widetilde{G}_2$ are Hankel : 

\bea
\widetilde{G}_1 = \left(
\begin{array}{cccc}
G_{11} & -G_{22} & G_{33} & \cdots  \\ 
-G_{22} & G_{33} & \cdots &  \\
G_{33} & \vdots & & \\
\vdots & & & 
\end{array}\right) \ , \ 
\widetilde{G}_2 = \left(
\begin{array}{cccc}
G_{22} & -G_{33} & G_{44} & \cdots  \\ 
-G_{33} & G_{44} & \cdots &  \\
G_{44} & \vdots & & \\
\vdots & & & 
\end{array}\right).
\eea 


Notice that if one wants to evaluate the squared distance (\ref{Dn2}) by computing the determinant of $G$, we can obtain the determinant of a moment matrix 
$H_{k,j}=\int_{-\infty}^{\infty} t^{k+j}d\nu(t)$ by multiplying the rows of $\widetilde{G}$ by $(-1)^k$ and the columns by $(-1)^j$. 

The study of such determinant falls into the theory of Hankel determinants with symbols that possess power-like singularities, a specific case of more general Fisher-Hartwig singularities. This theory is well established with a finite number of power-like singularities, see e.g. \cite{Kra07,Cha19}. However, the infinite number of zeros of $\zeta(1/2+it)$, which appear in our symbol, seems to be a challenging issue for this theory to apply.

It is also possible to provide simpler expressions for the scalar products $b_k=\langle\chi,g_k^\times\rangle$. Indeed:
\bean
b_k=\int_0^\infty \chi(t) \int_0^{\infty} \left\{\frac{x}{t}\right\}g_k(x)\frac{dx}{x}\ dt = 
\int_0^\infty  \left\{x\right\}\int_0^{\infty}\chi(t)g_k(tx)dt \frac{dx}{x}.
\eean
But
\bean
\int_0^{1}g_{k+1}(tx)dt & = & - \int_0^{1}txg'_{k}(tx)dt -\frac{1}{2} \int_0^{1}g_{k}(tx)dt \\
    & = & -[tg_k(tx)]_0^1 + \int_0^{1}g_{k}(tx)dt -\frac{1}{2} \int_0^{1}g_{k}(tx)dt\\
    & = & -g_k(x) +\frac{1}{2} \int_0^{1}g_{k}(tx)dt.
\eean
Therefore
\bean
b_{k+1} & = & \frac{1}{2}b_k
-\int_0^\infty \left\{x\right\}g_k(x) \frac{dx}{x}.
\eean

\subsection{Examples for $g_0$ and comments}

\subsubsection{The $\Xi$-function}
As an answer to a suggestion by P. Biane and C. Delaunay, it is possible to use the induction (\ref{eq:recupoly}) to find a $g_0$ that produces within $\nu$ the $\Xi$-function:
\bean
\Xi(t)=\xi(s) = \frac{1}{2}s(s-1)\pi^{-s/2}\Gamma\left(\frac{s}{2}\right)\zeta(s), \quad s=1/2+it.
\eean
Indeed, to construct $g_0$ such that  $\widehat{g_0}(s)=\frac{1}{2}(s-1)s^2\Gamma\left(\frac{s}{2}\right)$, we define first $h_0(t)=e^{-t^2}$, so that 
\bean
\widehat{h_0}(s)=\int_0^\infty t^{s-1}e^{-t^2}dt=\int_0^\infty u^{s/2-1/2}\frac{e^{-u}}{2\sqrt{u}}du=\frac{1}{2}\Gamma\left(\frac{s}{2}\right).
\eean
Then, in order to have $\widehat{g_0}(s) = (s^3 - s^2) \widehat{h_0}(s)$, we compute
\bean
g_0(t) &=& \left((-t\frac{d}{dt})^3 - (-t\frac{d}{dt})^2\right) h_0(t)\\
&=& (8t^6-28t^4+12t^2)e^{-t^2}\,.
\eean

Hence, again in the case $r_k=1/2$, we can obtain, taking $\pi^{-1/4}g_0$, 
\bean
G_{k,k} & = & \int_{-\infty}^{\infty} t^{2k}\Xi(t)^2 dt.
\eean
As an historical nod, notice that quantities as $\int_{-\infty}^{\infty} t^{2k}\Xi(t) dt$ or related to, have been used by P\'olya and Hardy to study the zeros of $\zeta$ on the critical line, see e.g. \cite[10.2-10.4, p.256-260]{Tit86}.

\subsubsection{Seed with compact support}

If the seed $g_0$ has a compact support, say $(0,M)$, then the $g_k$'s are also supported on $(0,M)$. This removes the control condition (\ref{Cg}) on the coefficient $c_{k,n}$ as soon as $M_n\ge M$. If one wants to prove RH, one then only needs a density result.

Amazingly, in this compact support case, we then lose the density of the polynomials in the whole space $L^2(\nu)$. Indeed, Ingham \cite{Ing34} remarked (originally for Fourier transform) that if $g_0$ has compact support $[a,b]\subset (0,\infty)$, we cannot have $\widehat{g_0}(s)\ll e^{-\delta |t|}$, $\delta>0$, where again $s=1/2+it$. We transfered here the result for Fourier transform to Mellin transform noticing that 
$$\widehat{g_0}(s)=\int_{-\infty}^\infty e^{us}g_0(e^u)du=\int_{-\infty}^\infty e^{u/2}g_0(e^u)e^{iut}du$$ and that $u\mapsto e^{u/2}g_0(e^u)$ has compact support $[\log(a),\log(b)]$. More precisely, for any decreasing function $\e(t)=o(1)$, there exists a compactly supported function $g_0$ such that $\widehat{g_0}(s)\ll e^{-\e(|t|)|t|}$ if and only if $\d \int_1^\infty \frac{\e(t)}{t}\,dt<\infty$. See \cite[Annexe]{Swa19} for a nice account on these results. But this is precisely incompatible with the condition that ensures the density of the polynomials in weighted $L^2(\R)$-spaces, namely $\d \int_{-\infty}^\infty \frac{\log w(t)}{1+t^2}dt=\infty$. The link between the weight $w$ and the function $\e$ is $\log w(t)=\e(t)t$ here. See \cite[4.8.3 p.77]{Nik12} for many aspects regarding such theorems.  

Let us stress that Mellin isometry involves an integration on the whole real line. On the half line, a density result is obtained by Mergelyan \cite{Mer58} with the condition $\d \int_1^\infty \frac{\log w(t)}{t^{3/2}}dt=\infty$. In our framework, Borichev \cite{Bor20} proved that for all $\e>0$ there exist $Q\in\C[X]$ such that
\bean
\int_{0}^{\infty}\left|\frac{1}{s\varphi(s)}-Q(s)\right|^2 |\varphi(s)|^2dt & < & \epsilon,
\eean
where $\d \varphi(s) = \frac{\zeta(s)}{s} \widehat{g_0}(s)$, $s=1/2+it$, $g_0$ has compact support and verifies $\widehat{g_0}(s)\ll e^{-|t|/\log^2|t|}$.

Although we do not have the density on the whole line $(-\infty,+\infty)$, there exist results regarding the closure of the polynomials, see e.g. \cite{Bor01}. So, we ask for the following question: 

What is the closure of the space generated by the polynomials in $L^2\left(\R,|\varphi(s)|^2dt\right)$ when $g_0$ has compact support?

\bigskip

\section*{Acknowledgement} The authors warmly thank the unknown referee for his/her very careful reading of their manuscript, corrections and suggestions that improved its presentation. They are also very grateful to Vincent Alouin for his remark (\ref{alouin}). This was an important start for our study. The second author thanks Michel Balazard, Sergey Berezin, Philippe Biane, Alexander Borichev, Christophe Delaunay, Sophie Grivaux, Igor Krasovsky, Pierre Lazag, Herv\'e Queff\'elec and Olivier Ramar\'e for helpful references and stimulating discussions.

\end{document}